\documentclass[12pt]{article}
\usepackage{amsmath,amssymb,amstext,dsfont,fancyvrb,float,fontenc,graphicx,subfigure,theorem,hyperref,esint}
\usepackage[utf8]{inputenc}


 \usepackage{tikz,everypage}


\usepackage[letterpaper]{geometry}
\setlength{\textwidth}{6.3in}
\setlength{\textheight}{8.7in}
\setlength{\topmargin}{0pt}
\setlength{\headsep}{0pt}
\setlength{\headheight}{0pt}
\setlength{\oddsidemargin}{0pt}
\setlength{\evensidemargin}{0pt}


\ifx\volno\undefined\def\volno{0}\fi
\ifx\volyear\undefined\def\volyear{2017}\fi
\ifx\pagno\undefined\def\pagno{000--000}\fi

\newfont{\footsc}{cmcsc10 at 8truept}
\newfont{\footbf}{cmbx10 at 8truept}
\newfont{\footrm}{cmr10 at 10truept}

\usepackage{fancyhdr}
\pagestyle{fancy}
\fancyhead{} 

\usepackage{relsize}
\usepackage{sectsty}
\allsectionsfont{\larger[-1]} 

\renewcommand\paragraph{\@startsection{paragraph}{4}{\z@}
                                    {2ex \@plus.5ex \@minus.2ex}
                                    {-1em}
                                    {\normalfont\normalsize\bfseries}}

\renewcommand\subparagraph{\@startsection{subparagraph}{5}{\parindent}
                                       {2ex \@plus.5ex \@minus .2ex}
                                       {-1em}
                                      {\normalfont\normalsize\bfseries}}

\newlength{\BiblioSpacing}
\setlength{\BiblioSpacing}{0.4ex plus 0.1ex minus 0.1ex}

\renewenvironment{thebibliography}[1]{
\begin{oldthebibliography}{#1}
\setlength{\parskip}{\BiblioSpacing}
\setlength{\itemsep}{\BiblioSpacing}
}
{
\end{oldthebibliography}
}

\usepackage[strict]{changepage}
\def\abstractname{Abstract -}   
\def\abstract{\begin{adjustwidth}{1cm}{1cm} \par    \footnotesize \noindent {\bf \abstractname} 
\def\endabstract{ \end{adjustwidth} \smallskip }}



{\theorembodyfont{\itshape}\newtheorem{theorem}{Theorem}[section]}
{\theorembodyfont{\itshape}\newtheorem{proposition}[theorem]{Proposition}}
{\theorembodyfont{\itshape}}
{\theorembodyfont{\itshape}\newtheorem{lemma}[theorem]{Lemma}}
{\theorembodyfont{\itshape}\newtheorem{corollary}[theorem]{Corollary}}
{\theorembodyfont{\rm}}
{\theorembodyfont{\rm}\newtheorem{remark}[theorem]{Remark}}
{\theorembodyfont{\rm}}
{\theorembodyfont{\rm }}


\def \R {\mathbb{R}}
\def \C {\mathcal{C}}

\DeclareMathOperator{\Ric}{Ric}
\DeclareMathOperator{\Sec}{Sec}

\DeclareMathOperator{\diam}{diam}
\DeclareMathOperator{\vol}{vol}

\DeclareMathOperator{\diver}{div}

\newcommand{\norm}[1]{||#1||}
\newcommand{\ceil}[1]{\left\lceil #1 \right\rceil}
\newcommand{\N}{\mathbb{N}}
\newtheorem{claim}{Claim}

\newcommand{\drawingEps}{0.5}
\newcommand{\drawingR}{2}
\newcommand{\drawingXOne}{1.5}
\newcommand{\drawingXTwo}{{\drawingXOne + sqrt(\drawingR^2 - \drawingEps^2)}}
\newcommand{\drawingXTwoNeg}{{-\drawingXOne - sqrt(\drawingR^2 - \drawingEps^2)}}

\title{\Large\bf Manifolds with bounded integral curvature and no positive eigenvalue lower bounds}
  \vskip 1.5em
\author{C. Anderson, X. Ramos Oliv\'e, and K. Spinelli}

\begin{document}
\setcounter{page}{1}
\maketitle
\thispagestyle{fancy}

\vskip 1.5em

\begin{abstract}
We provide an explicit construction of a sequence of closed surfaces with uniform bounds on the diameter and on
$L^p$ norms of the curvature, but without a positive lower bound on the first non-zero eigenvalue of the Laplacian
$\lambda_1$. This example shows that the assumption of smallness of the $L^p$ norm of the curvature is a necessary
condition to derive Lichnerowicz and Zhong-Yang type estimates under integral curvature conditions.
\end{abstract}
 
\begin{keywords}
Laplace eigenvalue; Integral Ricci curvature; Dumbbell surfaces
\end{keywords}

\begin{MSC}
58J50; 53C21; 58J60
\end{MSC}

\section{Introduction}

On an $n$-dimensional Riemannian manifold $(M^n,g)$, consider the Laplace-Beltrami operator $\Delta u := \diver(\nabla u)$ for $u\in \C^2(M)$. We say that $u \not \equiv 0$ is an \emph{eigenfunction of the Laplacian} on $M$ with \emph{eigenvalue} $\lambda \in \R$, if
\begin{equation}\label{eigenprob}
    -\Delta u =\lambda u\ \ \text{in }M.
\end{equation}
When $M$ is closed (compact without boundary), then the spectrum of $-\Delta$ is discrete and the eigenvalues can be arranged as a non-decreasing diverging sequence of non-negative numbers
\[0=\lambda_0 < \lambda_1 \leq \lambda_2 \leq \lambda_3 \leq \ldots \rightarrow \infty.\]
Note that the smallest eigenvalue is always $\lambda_0 =0$, since non-zero constant functions are trivial examples of eigenfunctions on $M$. It is interesting to estimate from below the first non-zero eigenvalue $\lambda_1$ in geometric terms: lower bounds on $\lambda_1$ are connected to Poincar\'e and Sobolev-Poincar\'e inequalities, and from a physical point of view, they correspond to estimating the fundamental energy gap.

In \cite{Lichnerowicz}, Lichnerowicz showed that if  $\Ric \geq (n-1)K$ for some $K>0$, where $\Ric$ denotes the Ricci curvature, then 
\begin{equation}\label{Lichnerowicz}
    \lambda_1 \geq nK.
\end{equation}
Notice that this is a uniform estimate on $\lambda_1$ independent of $(M,g)$, in the class of $n-$manifolds satisfying the curvature assumption. 

In the case when $K=0$, a similar estimate can be found when the diameter of $M$ (the largest distance between two points) is bounded above by $D> 0$. This is not an additional assumption, since when $\Ric \geq (n-1)K>0$, the diameter satisfies $\diam(M) \leq \frac{\pi}{\sqrt{K}}$ by Myers' Theorem. With the work of Li-Yau \cite{LiYau} and Zhong-Yang \cite{ZhongYang}, it was shown that if $\Ric \geq 0$ and $\diam(M) \leq D$, then
\begin{equation}\label{ZhongYang}
    \lambda_1 \geq \frac{\pi^2}{D^2}.
\end{equation}
Estimates \eqref{Lichnerowicz} and \eqref{ZhongYang} are sharp, since equality is achieved for the sphere $S^n$ and the circle $S^1$, respectively. In fact, \cite{Obata} and \cite{HangWang} proved rigidity results, i.e. they showed that $S^n$ and $S^1$, respectively, are the only manifolds in these classes where equality is achieved.

Lower bounds in the negative curvature case were studied by Yang \cite{Yang}. There is a very extensive literature of related results and alternative proofs by several authors (see for instance \cite{AndrewsClutterbuck}, \cite{ChenWang},  \cite{Kroger}, \cite{LingLu}, \cite{ShiZhang}, \cite{ZhangWang} and the references therein). 

Recently, there has been an increasing interest in relaxing the curvature assumption from a pointwise condition to an integral condition. Integral assumptions are more stable under perturbations of the metric, and are much weaker than pointwise assumptions. This is particularly interesting when considering sequences of manifolds converging in some sense to a limit space, like in the Gromov-Hausdorff sense (see \cite{BuragoBuragoIvanov} for a general introduction to the subject), or in the Sormani-Wenger intrinsic flat convergence sense (see \cite{SormaniWenger}). If the curvature assumption is more stable under perturbations of the metric, a sequence of slightly perturbed manifolds will satisfy the same curvature assumption uniformly, and then results like the ones above can be used to derive information about the limit space. An example of this is the  Cheeger-Colding spectral convergence theory \cite{CheegerColding}, for uniform lower bounds on $\rm Ric$. 

In \cite{Gallot} and \cite{Petersen-Sprouse1998}, estimates on the isoperimetric constant were obtained in terms of integral conditions on the Ricci curvature, which imply estimates on the Sobolev constant and on $\lambda_1$. Petersen-Wei \cite{PetersenWei} generalized the classical Bishop-Gromov volume comparison estimate to integral curvature, opening up the possibility of extending some of the classical results for pointwise bounds on the Ricci curvature to integral assumptions. 

To be more precise, let $\rho\left( x\right) $ be the smallest eigenvalue for the Ricci tensor at $x\in M$. For a constant $K \in \mathbb R$, let $\rho_K$ be the amount of Ricci curvature lying below $(n-1)K$, i.e.
\begin{equation}\label{rhoK}
\rho_K=\max\{-\rho(x)+(n-1)K, 0\}.
\end{equation}
In particular, $\rho_0 = \rho^-$ is the negative part of $\rho$. The following quantity measures the amount of Ricci curvature lying below $(n-1)K$ in an $L^p$ average sense:
\begin{equation}\label{integralcurv}
\bar k(p,K)=\left(\frac{1}{\vol(M)}\int_M \rho_K^p dv\right)^{\frac{1}{p}}= \left(\fint_M \rho_K^p dv\right)^{\frac{1}{p}}.    
\end{equation}
Notice that $\bar k(p,K) = 0$ if and only if $\Ric \ge (n-1)K$.
 
The first estimates on $\lambda_1$ in terms of $\bar k(p,K)$ are due to Gallot in the pioneering work \cite{Gallot}, although they are not sharp. A generalization of \eqref{Lichnerowicz} was proven by Aubry in \cite{Aubry}. For $K>0$ and $p>n/2$ one has
\begin{equation}\label{Aubry}
    \lambda_1 \geq nK\left(1-C(n,p)\bar k(p,K)\right).
\end{equation}

More recently, in \cite{RSWZ}, the second author together with Seto, Wei and Zhang generalized \eqref{ZhongYang}. If $\diam(M)<D$, for any $\alpha\in (0,1)$ and $p>n/2$ there exists $\epsilon(n,p,\alpha,D)>0$ such that if $\bar k(p,0)<\epsilon$ then
\begin{equation}\label{RSWZ}
    \lambda_1\geq \alpha \frac{\pi^2}{D^2}.
\end{equation}
Estimates \eqref{Aubry} and \eqref{RSWZ} recover \eqref{Lichnerowicz} and \eqref{ZhongYang} in the limit where $\Ric \geq (n-1)K>0$ and $\Ric\geq 0$, respectively, so they are in that sense sharp. Further extensions of \eqref{RSWZ} to a Kato-type condition on the Ricci curvature (an even weaker curvature assumption) recently appeared in the work of Rose-Wei \cite{RoseWei}. Another direction of generalization could be to follow the work of Wu \cite{Wu}, who studied sm\-ooth metric measure spaces under integral conditions on the Bakry-\'Emery Ricci curvature; eigenvalue estimates in this setting were proven in \cite{Ramosthesis}.

The motivation of the present article is to explore the necessity of smallness for $\bar k(p,K)$ in the above theorems. The necessity of the condition $p>n/2$ to derive an eigenvalue estimate was discussed by \cite{Gallot}. Several authors (see \cite{Gallot} and \cite{DaiWeiZhang}) have discussed the necessity of the smallness of $\bar k(p,K)$ in order to derive a bound of the first Betti number and to extend Cheeger-Colding theory to integral curvature conditions. Here we focus on the necessity of $\bar k(p,K)$ being small in order to obtain a positive lower bound on $\lambda_1$. 

The necessity of smallness of $\bar k(p,0)$ to derive lower bounds on any eigenvalue $\lambda_i$ had already been discussed in \cite{Gallot} (see Appendix A.5). Our approach here is more elementary, and we discuss why the necessity of $\bar k(p,0)$ being small implies that $\bar k(p,K)$ also needs to be small. Moreover, with our approach we see that, even under a stronger assumption on the curvature, like integral sectional curvature bounds, smallness would be necessary; this had not been explored previously in the literature to the best of our knowledge. 

The general idea that we will develop was outlined in \cite{RSWZ}, where the authors considered a sequence of manifolds along which $\lambda_1$ goes to $0$, and where $\bar k(p,0)$ can not be made small. However, in that paper the example discussed is not explicit, so it is not clear if the construction can be done in such a way that $\bar k(p,0)$ remains bounded along the sequence. We will construct an explicit sequence of manifolds along which $\bar k(p,0)$ is uniformly bounded, but not small, and for which $\lambda_1$ goes to $0$, showing that smallness of $\bar k(p,0)$ is necessary to generalize \eqref{ZhongYang}. Furthermore, we also show that $\bar k(p,K)$ is also uniformly bounded for any $K>0$, showing that the smallness of that quantity is also necessary to generalize \eqref{Lichnerowicz}. Alternative explicit constructions can also be found in \cite{Leal}, although their example had a different purpose and no estimates on $\bar k(p,K)$ were provided.

More precisely, our main theorem is

\begin{theorem}
    \label{mainthm}
 For any $K\geq 0$, there exist constants $p>1$, $C>0$, and $D>0$, and a sequence of $\mathcal{C}^\infty$-surfaces $\{(\Sigma_i, g_i)\}_{i\in \mathbb{N}}$, such that $\diam (\Sigma_i)\leq D$, $\bar k(p,K)_{\Sigma_i} \leq C$, and 
    \[\lim_{i\rightarrow \infty} \lambda_1\left( \Sigma_i \right) =0.\]
\end{theorem}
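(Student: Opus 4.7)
Take $\Sigma_i$ to be a \emph{dumbbell} surface of revolution with metric $dt^2 + r_i(t)^2\,d\theta^2$: two caps of a round sphere of fixed radius $R$ (chosen so that $1/R^2 \ge K$ when $K>0$) joined to a thin cylindrical neck of radius $\epsilon_i \to 0$ by smooth transition pieces of fixed arclength $\ell$. Explicitly, on each transition one can take $r_i(t) = \epsilon_i + (R - \epsilon_i)\cos^2(\pi t/(2\ell))$, which matches the sphere equator ($r_i = R$, $r_i' = 0$) at one end and the cylinder ($r_i = \epsilon_i$, $r_i' = 0$) at the other. The meridian length is bounded uniformly in $i$, so $\diam(\Sigma_i) \le D$.

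The decay $\lambda_1(\Sigma_i) \to 0$ follows from the classical dumbbell test function. Let $u_i$ be $+1$ on the left cap, $-1$ on the right cap, and linear in $t$ across the neck of length $L$. By symmetry $\int_{\Sigma_i} u_i\,dv = 0$, so $u_i$ is admissible for the variational characterization of $\lambda_1$. Since $|\nabla u_i|^2 = 4/L^2$ is supported only on the neck, whose area is $2\pi\epsilon_i L$, the Dirichlet integral equals $8\pi\epsilon_i/L \to 0$, whereas $\int u_i^2\,dv$ is bounded below by the total area of the two caps. The Rayleigh-quotient inequality then forces $\lambda_1(\Sigma_i) \to 0$.

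The core computation is the uniform bound on $\bar k(p,K)_{\Sigma_i}$. In dimension two $\Ric = K_g\,g$, so $\rho_K = \max(K - K_g, 0)$. On the spherical caps $K_g = 1/R^2 \ge K$, whence $\rho_K \equiv 0$; on the neck $K_g \equiv 0$ and the area is $O(\epsilon_i)$, contributing $O(\epsilon_i)$. The delicate region is each transition $T_i$, where $K_g = -r_i''/r_i$ is negative and grows pointwise like $-1/(\ell^2 \epsilon_i)$ near the thin end. Setting $s = \ell - t$ and $u = s\sqrt{A/\epsilon_i}$ with $A = (R - \epsilon_i)\pi^2/(4\ell^2)$, a direct expansion gives $\int_{T_i}|K_g|^p\,dv \sim C\,\epsilon_i^{3/2-p}\int_0^{u_{\max}}(1 + u^2)^{1-p}\,du$ with $u_{\max} \sim \epsilon_i^{-1/2}$. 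For any fixed $p \in (1, 3/2)$ the $u$-integral diverges like $u_{\max}^{3-2p}$, so the two factors of $\epsilon_i$ cancel and $\int_{T_i}|K_g|^p\,dv$ remains bounded in $i$. Since $\vol(\Sigma_i)$ is bounded below by the area of the two caps, this yields $\bar k(p,K)_{\Sigma_i} \le C$ for any $p \in (1, 3/2)$.

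\textbf{The main obstacle} is precisely this $L^p$-estimate on the transitions. As $\epsilon_i \to 0$ the Gaussian curvature on $T_i$ becomes pointwise unbounded, so $\int|K_g|^p\,dv$ is kept finite only by a delicate cancellation between the pointwise growth $\sim \epsilon_i^{-p}$ and the shrinking area $\sim \epsilon_i^{3/2}$ of the high-curvature sliver. The balance works exactly for $p \in (1, 3/2)$ and degenerates logarithmically at the threshold $p = 3/2$, which is why the construction produces some $p > 1$ rather than the full range $p > n/2 = 1$.
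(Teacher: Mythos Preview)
Your overall strategy---dumbbell surfaces of revolution, the Cheeger--Calabi test function for $\lambda_1 \to 0$, and a direct $L^p$ estimate of the curvature on the transition---matches the paper's, but the execution differs. The paper builds the transition from a smooth step function $s$ that is flat to all orders at both ends, and then bounds $|g_\epsilon''|^3/g_\epsilon$ by an $\epsilon$-independent continuous function; this yields the result for $p=3/2$. Your $\cos^2$ interpolant and the rescaling $u = s\sqrt{A/\epsilon_i}$ give a more transparent asymptotic computation, at the price of working only for $p \in (1,3/2)$. Since the theorem merely asks for \emph{some} $p>1$, that trade-off is acceptable.

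There is, however, a gap you should address: your surfaces are not $\mathcal C^\infty$, only $\mathcal C^1$. At the cylinder end $t=\ell$ one computes $r_i''(\ell) = (R-\epsilon_i)\pi^2/(2\ell^2) \neq 0$, while on the adjacent cylinder $r''\equiv 0$; likewise $r_i''(0) = -(R-\epsilon_i)\pi^2/(2\ell^2)$ does not match the sphere's value $-1/R$ at the equator. The statement explicitly asks for $\mathcal C^\infty$ surfaces, and the Gaussian curvature is not classically defined at these junction circles. The fix is easy but should be written down: mollify $r_i$ over a strip of width $\delta = \epsilon_i$ at the cylinder junction and over a fixed-width strip at the sphere junction. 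In the cylinder strip $|K_g|$ stays $O(1/\epsilon_i)$ while the area is $O(\epsilon_i^2)$, contributing $O(\epsilon_i^{2-p}) \to 0$ to $\int |K_g|^p\,dv$; at the sphere end everything is uniformly bounded. With this repair your argument goes through.

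Two minor points: on the positive-curvature half of the transition ($t\in(0,\ell/2)$) you do not verify $K_g \ge K$, so $\rho_K$ need not vanish there; but since $K_g$ and the area are bounded this only adds a bounded constant. And your final sentence is slightly garbled: the theorem only asks for existence of one $p>1$, so obtaining $p\in(1,3/2)$ is exactly what is needed, not a deficiency.
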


\begin{remark}
    Along the sequence, the volume (area) of the surfaces is also uniformly bounded
    \[0<a\leq \vol(\Sigma_i)\leq A,\]
    for some constants $A,a>0$. See Lemma \ref{diamvolbounds} and the proof of Theorem \ref{curvature-uniformly-bounded} for more details.
\end{remark}
\begin{remark}
    The condition $p>1$ is equivalent to $p>\frac{n}{2}$ in dimension $n=2$, as in the situation of Theorem \ref{mainthm}. In our proof below, we consider an example with $p=3/2$, although the proof could be modified to allow larger values of $p$.
\end{remark}
\begin{remark} \label{rmk:ricci-and-gaussian-curvature}
    In a general manifold $M^n$, if $v,e_1,\ldots , e_{n-1}$ form an orthonormal basis of $T_xM$, then
    \[\Ric(v,v) = \sum_{i=1}^{n-1} \Sec(v,e_i),\]
    where $\Sec$ denotes the Sectional curvature. $\Sec(v,e_i)$ only depends on the plane spanned by $v$ and $e_i$, not on the particular pair of vectors. Hence, when $n=2$, $\Ric(v,v) = \Sec(T_xM)$, and since in dimension 2 Sectional curvature and Gaussian curvature are the same (by Gauss's Theorema Egregium), estimating $\bar k(p,K)$ can be done by estimating $L^p$ norms of the Gaussian curvature $\kappa$ of $\Sigma_i$. This is the approach of our proof (see Theorem \ref{curvature-uniformly-bounded}). Hence, our example above shows that smallness is necessary even under the more restrictive curvature assumption of integral  sectional curvature.  
\end{remark}
The paper is structured as follows. In Section \ref{sec:eigenvalue-estimate} we review the eigenvalue estimate for sequences of surfaces of revolution with a dumbbell-like shape. We derive the estimates without a specific profile curve. In Section \ref{sec:examples-dumbbells} we construct an explicit sequence of surfaces and provide estimates for $\bar k(p,K)$ in Theorem \ref{curvature-uniformly-bounded}, which is the main technical result of the paper. We also derive estimates on their area and diameter, and conclude the proof of Theorem \ref{mainthm}. We finish with an Appendix, where we include the proofs of some technical claims used in Section~\ref{sec:examples-dumbbells}, not included in the main text to make the article more readable. 

\section{Eigenvalue Estimate} \label{sec:eigenvalue-estimate}
In the celebrated paper \cite{Cheeger}, Cheeger explains an example from Calabi, where they consider a sequence of dumbbell shaped surfaces whose first eigenvalue goes to $0$. This is portrayed as an example to show that some sort of curvature assumption is needed to derive a positive lower bound for $\lambda_1$. Our example is based on their construction, but we need to explicitly construct the dumbbells to show that $\bar k(p,K)$ is uniformly bounded. We will take care of the construction of these surfaces in the next section. 
 
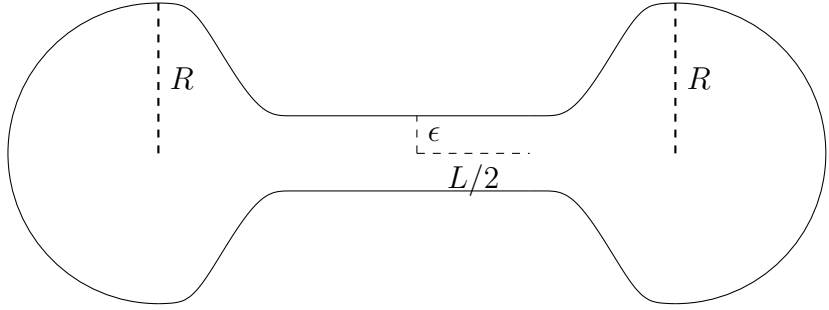
\begin{figure}[h]
    \centering

  \begin{tikzpicture}
    \draw (-\drawingXOne,-\drawingEps) -- (\drawingXOne,-\drawingEps);
    \draw (-\drawingXOne,\drawingEps) -- (\drawingXOne,\drawingEps);

    \draw[-] plot[smooth] coordinates {%
        (1.500000,0.500000)
        (1.600000,0.500000)
        (1.700000,0.500094)
        (1.800000,0.503319)
        (1.900000,0.521143)
        (2.000000,0.566093)
        (2.100000,0.642731)
        (2.200000,0.748077)
        (2.300000,0.875935)
        (2.400000,1.020057)
        (2.500000,1.175142)
        (2.600000,1.336341)
        (2.700000,1.497998)
        (2.800000,1.652165)
        (2.900000,1.787726)
        (3.000000,1.891874)
        (3.100000,1.956021)
        (3.200000,1.984680)
        (3.300000,1.995334)
        (3.400000,1.999667)
        (3.436492,1.999667)
      };
    \draw[-] plot[smooth] coordinates {%
        (1.500000,-0.500000)
        (1.600000,-0.500000)
        (1.700000,-0.500094)
        (1.800000,-0.503319)
        (1.900000,-0.521143)
        (2.000000,-0.566093)
        (2.100000,-0.642731)
        (2.200000,-0.748077)
        (2.300000,-0.875935)
        (2.400000,-1.020057)
        (2.500000,-1.175142)
        (2.600000,-1.336341)
        (2.700000,-1.497998)
        (2.800000,-1.652165)
        (2.900000,-1.787726)
        (3.000000,-1.891874)
        (3.100000,-1.956021)
        (3.200000,-1.984680)
        (3.300000,-1.995334)
        (3.400000,-1.999667)
        (3.436492,-1.999667)
      };
    \draw[-] plot[smooth] coordinates {%
        (-1.500000,0.500000)
        (-1.600000,0.500000)
        (-1.700000,0.500094)
        (-1.800000,0.503319)
        (-1.900000,0.521143)
        (-2.000000,0.566093)
        (-2.100000,0.642731)
        (-2.200000,0.748077)
        (-2.300000,0.875935)
        (-2.400000,1.020057)
        (-2.500000,1.175142)
        (-2.600000,1.336341)
        (-2.700000,1.497998)
        (-2.800000,1.652165)
        (-2.900000,1.787726)
        (-3.000000,1.891874)
        (-3.100000,1.956021)
        (-3.200000,1.984680)
        (-3.300000,1.995334)
        (-3.400000,1.999667)
        (-3.436492,1.999667)
      };
    \draw[-] plot[smooth] coordinates {%
        (-1.500000,-0.500000)
        (-1.600000,-0.500000)
        (-1.700000,-0.500094)
        (-1.800000,-0.503319)
        (-1.900000,-0.521143)
        (-2.000000,-0.566093)
        (-2.100000,-0.642731)
        (-2.200000,-0.748077)
        (-2.300000,-0.875935)
        (-2.400000,-1.020057)
        (-2.500000,-1.175142)
        (-2.600000,-1.336341)
        (-2.700000,-1.497998)
        (-2.800000,-1.652165)
        (-2.900000,-1.787726)
        (-3.000000,-1.891874)
        (-3.100000,-1.956021)
        (-3.200000,-1.984680)
        (-3.300000,-1.995334)
        (-3.400000,-1.999667)
        (-3.436492,-1.999667)
      };

    \draw (\drawingXTwoNeg,\drawingR) arc (90:270:\drawingR);
    \draw (\drawingXTwo,\drawingR) arc (90:-90:\drawingR);

    \draw[dashed,thick] (\drawingXTwoNeg,0) -- node[right] {$R$} (\drawingXTwoNeg,\drawingR);
    \draw[dashed,thick] (\drawingXTwo,0) -- node[right] {$R$} (\drawingXTwo,\drawingR);

    \draw[dashed] (0,0) -- node[right] {$\epsilon$} (0,\drawingEps);
    \draw[dashed] (0,0) -- node[below] {$L/2$} (\drawingXOne,0);
  \end{tikzpicture}
  \caption{Dumbbell surface}
    \label{fig:dumbbell}
\end{figure}

Consider a dumbbell shaped surface $\Sigma\subseteq \R^3$ (see Fig. \ref{fig:dumbbell}), with the Riemannian metric inherited from $\R^3$, formed by two halves of spheres of radius $R$, joined by two neck regions to a thin cylinder of length $L$ and radius $\epsilon>0$.

Suppose that there exists a constant $D>0$ such that $\diam(\Sigma)\leq D$, where $\diam(\Sigma)$  denotes the intrinsic diameter of $\Sigma$.

Following the proof in \cite{Cheeger}, consider a function $g$ defined to be $-c$ on the left half sphere and neck region, $+c$ on the right half sphere and neck region, and to increase linearly along the cylinder. In particular, we have
\[\int_\Sigma gdv =0, \ \int_\Sigma |\nabla g|^2 dv = 8\pi \epsilon \frac{c^2}{L} \text{ and } \int_\Sigma g^2 dv \geq c^2(4\pi R^2).\]
Then by the Rayleigh quotient characterization of $\lambda_1$, using $g$ as a test function, we have
\[\lambda_1(\Sigma) = \inf_{\int_\Sigma f = 0} \frac{\int_\Sigma |\nabla f|^2dv}{\int_\Sigma f^2 dv}\leq \frac{\int_\Sigma |\nabla g|^2dv}{\int_\Sigma g^2 dv} \leq \frac{2}{LR^2}\epsilon.\]
Thus, by considering a sequence of dumbbells $\Sigma_k$, with cylinder of radius $\epsilon_k = \frac{1}{k}$, we obtain  
\[\lim_{k\rightarrow \infty}\lambda_1(\Sigma_k) \leq \lim_{k\rightarrow \infty} \frac{2}{LR^2}\frac{1}{k}=0.\]

This is not a contradiction  with the classical Zhong-Yang estimate \eqref{ZhongYang}, since the dumbbells do not satisfy $\Ric \geq 0$, as the neck regions contain points with negative curvature. In \cite{RSWZ} they discussed why this also does not contradict the estimate \eqref{RSWZ}: the amount of negative integral curvature of the dumbbells $\bar k(p,0)$ is not small. Since by definition $\bar k(p,K)\geq \bar k(p,0)$ for $K\geq 0$, $\bar k(p,K)$  is also not small, which explains why estimate \eqref{Aubry} doesn't provide a lower bound either.

To show that the smallness of $\bar k(p,K)$ is necessary and can not be replaced by just a uniform bound, we need to show that this sequence can be constructed in such a way that $\bar k(p,K)$ is uniformly bounded for some $p>1$, which was not shown in \cite{RSWZ}, not even in the $K=0$ case. We also need the uniform bound $\diam(\Sigma_k)\leq D$ from the theorem to hold. To obtain this,  one needs a more explicit construction of the surface $\Sigma_k$, as presented in the next section. We also include a uniform bound $\vol(\Sigma_k)\leq A$ on the area of the surfaces for completion.


\section{Examples of Dumbbells}\label{sec:examples-dumbbells}
In this section we will construct the dumbbell-like surfaces as surfaces of revolution. As described in Section \ref{sec:eigenvalue-estimate}, these dumbbells have a cylindrical handle in the center of radius $\epsilon>0$ and length $L$ joined through two neck regions to a spherical cap on each end of radius $R$, assuming $\epsilon < R$. We will specify half of the surface by constructing a profile curve for $x \geq 0$, the other half being completely symmetric.

To achieve the construction, we introduce a smooth step function $s$ given by
\begin{equation}\label{s(x)}
s(x) = \begin{cases}
    0 & x \leq 0 \\
    \left( 1 + \exp{\left( \frac{1}{x} - \frac{1}{1-x}\right) } \right)^{-1} & 0 < x < 1 \\
    1 & x \geq 1.
\end{cases}    
\end{equation}
The function $s$ is smooth and satisfies the following symmetry and decay properties (see Appendix Claim~\ref{clm:stepfeats})
\[s(x) + s(1-x) = 1, \qquad \lim_{x \to 0} s^{(n)}(x) x^{-m} = 0\ \forall m,n \geq 0,\]
\[\lim_{x \to 0} \frac{s'(2x)^a s''(2x)^b}{s(x)} = 0 \text{ for any $a,b \geq 0$ s.t. $a + b > 2$.}\]

Let $x_1 = L/2$ be the $x$-coordinate of the end of the dumbbell's cylinder and define \[c_\epsilon(x) = \sqrt{R^2 - (x - \sqrt{R^2 - \epsilon^2} - x_1)^2}.\] This is an upper semicircle of radius $R$ centered at $(x_1 + \sqrt{R^2 - \epsilon^2}, 0)$, such that $c_\epsilon(x_1) = \epsilon$. For $x$ near $x_1$, we have $0 < c_\epsilon(x) - \epsilon < c_0(x)$, $0 < c_\epsilon'(x) < c_0'(x)$, and $c_0''(x) < c_\epsilon''(x) < 0$. Denoting $x_2 := x_1 + \sqrt{R^2 - \epsilon^2}$, define the smooth profile curve by
\[g_\epsilon(x) = \begin{cases}
    \epsilon & 0\leq x < x_1 \\
    \epsilon + (c_\epsilon(x) - \epsilon) s\left( \frac{x-x_1}{\sqrt{R^2 - \epsilon^2}} \right) & x_1 \leq x \leq x_2 \\
    c_\epsilon(x) & x_2 < x \leq x_2 + R.
\end{cases}\]
By construction, $g_\epsilon$ is smooth, and is monotone in $[x_1, x_2]$.

We would like to bound the $L^p$ norm of $\rho_K$ on the surface of revolution generated by $g_\epsilon$ for some $p > 1$. For convenience, we will choose $p = 3/2$, which is sufficient for our purposes. Although the proof might work for larger values of $p$, in some steps our estimates in the Appendix would need to be adjusted.
\begin{theorem} \label{curvature-uniformly-bounded}
    Consider the surface of revolution generated by $g_\epsilon$. For $\epsilon > 0$ small enough and for any $K\geq 0$, there exists $C(K,R)>0$, independent of $\epsilon$, such that $$\bar k(3/2, K)<C.$$ 
\end{theorem}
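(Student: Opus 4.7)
Since $\Sigma_\epsilon$ has dimension $2$, Remark~\ref{rmk:ricci-and-gaussian-curvature} gives $\rho_K = (K-\kappa)_+$, where $\kappa$ is the Gaussian curvature. The plan is to bound $\int_{\Sigma_\epsilon}(K-\kappa)_+^{3/2}\,dv$ from above and $\vol(\Sigma_\epsilon)$ from below, both by constants depending only on $K$ and $R$. The surface decomposes naturally into the central cylinder, two symmetric transition regions, and two spherical caps, so I would estimate each piece separately using the standard formulas for a surface of revolution generated by $(x,g(x))$:
$$\kappa(x) = \frac{-g''(x)}{g(x)(1+g'(x)^2)^2}, \qquad dv = 2\pi g(x)\sqrt{1+g'(x)^2}\,dx.$$

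On the cylinder, $\kappa\equiv 0$, so the integrand is $K^{3/2}$ and the contribution is $K^{3/2}\cdot 2\pi\epsilon L$, bounded uniformly in $\epsilon$. On each spherical cap, $\kappa\equiv 1/R^2$, so the contribution is $(K-1/R^2)_+^{3/2}\cdot 2\pi R^2$, again bounded uniformly. The two caps alone give $\vol(\Sigma_\epsilon)\geq 4\pi R^2$, which is the desired uniform lower bound on the denominator of $\bar k(3/2,K)$.

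The main work is the transition region $[x_1,x_2]$ (the symmetric one is treated identically). Using the elementary inequality $(K-\kappa)_+^{3/2}\leq \sqrt{2}\,(K^{3/2}+|\kappa|^{3/2})$, the $K^{3/2}$ piece is harmless because the transition region has area at most $2\pi R^2$. The nontrivial quantity is
$$\int_{x_1}^{x_2}|\kappa|^{3/2}\,dv = 2\pi\int_{x_1}^{x_2}\frac{|g_\epsilon''(x)|^{3/2}}{g_\epsilon(x)^{1/2}\,(1+g_\epsilon'(x)^2)^{5/2}}\,dx.$$
I would partition $[x_1,x_2]$ into three subintervals. Near $x_2$, the symmetry $s(t)+s(1-t)=1$ makes $g_\epsilon$ close to $R$, and the bound is routine. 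On a middle subinterval, $s(t)$ is bounded below by a positive constant, so $g_\epsilon$ is uniformly bounded below, and the bound is again routine. The delicate case is the subinterval near $x_1$.

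The main obstacle is this last subregion, where $g_\epsilon\approx\epsilon$ makes $g_\epsilon(x)^{1/2}$ as small as $\sqrt{\epsilon}$ in the denominator. Here I would expand
$$g_\epsilon''(x) = c_\epsilon''(x)\,s(t) + \frac{2\,c_\epsilon'(x)\,s'(t)}{\sqrt{R^2-\epsilon^2}} + \frac{(c_\epsilon(x)-\epsilon)\,s''(t)}{R^2-\epsilon^2}, \qquad t = \frac{x-x_1}{\sqrt{R^2-\epsilon^2}},$$
and observe that although $c_\epsilon'$ and $c_\epsilon''$ can blow up (like $R/\epsilon$ and $R^2/\epsilon^3$ respectively near $x_1$), each such factor is multiplied by $s$, $s'$, or $s''$, which vanish at $t=0$ faster than any polynomial. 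The two decay properties listed in Claim~\ref{clm:stepfeats}, namely $\lim_{t\to 0}s^{(n)}(t)t^{-m}=0$ and the mixed ratio bound $\lim_{t\to 0}s'(2t)^a s''(2t)^b/s(t)=0$ for $a+b>2$, are exactly designed to dominate these polynomial blow-ups and to control the powers that arise after raising $|g_\epsilon''|$ to the $3/2$ and dividing by $g_\epsilon^{1/2}$. Carrying out this balance --- deferring the fiddly pointwise bounds to the Appendix, in the spirit of Claim~\ref{clm:stepfeats} --- should produce a uniform bound, completing the proof.
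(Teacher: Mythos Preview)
Your overall plan matches the paper's: reduce to Gaussian curvature, use the spherical caps for the volume lower bound $\vol(\Sigma_\epsilon)\ge 4\pi R^2$, and isolate the integral $\int_{x_1}^{x_2}|g_\epsilon''|^{3/2}/g_\epsilon^{1/2}\,dx$ on the neck, with the only delicate region being near $x_1$. Your handling of $K>0$ via $(K-\kappa)_+^{3/2}\le\sqrt{2}(K^{3/2}+|\kappa|^{3/2})$ is a fine alternative to the paper's Minkowski step at the end.

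The genuine gap is in your near-$x_1$ analysis. You write that $c_\epsilon'$ and $c_\epsilon''$ blow up ``like $R/\epsilon$ and $R^2/\epsilon^3$'' and that the vanishing of $s,s',s''$ at $t=0$ compensates. But these are two different limits: the $R/\epsilon$ blow-up is in the parameter $\epsilon$ (it is the value at $x=x_1$), while the $s$-decay is in the spatial variable via $t=(x-x_1)/\sqrt{R^2-\epsilon^2}$, and for any fixed $x>x_1$ the argument $t$ does \emph{not} tend to $0$ as $\epsilon\to 0$. Thus the decay of $s$ at $0$ does not by itself absorb an $\epsilon^{-1}$-type factor; a crude bound such as $|c_\epsilon'(x)s'(t)|^{3/2}/g_\epsilon(x)^{1/2}\le (R/\epsilon)^{3/2}|s'(t)|^{3/2}/\epsilon^{1/2}$ genuinely diverges as $\epsilon\to 0$ at fixed $x$.

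The paper's key move, missing from your sketch, is to strip out the $\epsilon$-dependence \emph{before} invoking the decay of $s$. Near $x_1$ one has the comparisons $0<c_\epsilon'(x)<c_0'(x)$ and $0<c_\epsilon(x)-\epsilon<c_0(x)$; Claim~\ref{clm:dblprimepos} gives $g_\epsilon''>0$ so the negative $c_\epsilon''$-term may simply be dropped; a monotonicity argument (using $\epsilon\le R/2$) replaces the arguments of $s',s''$ by $2(x-x_1)/R$. This produces an $\epsilon$-independent upper bound $g_\epsilon''(x)\le U(x)$. On the denominator side, Claim~\ref{clm:gepsineq} gives $g_\epsilon\ge g_0$. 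One is then left with the single $\epsilon$-free ratio $U(x)^3/g_0(x)=U(x)^3/\bigl(c_0(x)\,s((x-x_1)/R)\bigr)$, and \emph{now} the mixed-ratio property in Claim~\ref{clm:stepfeats} (with $a+b=3>2$, which is exactly why the ``$2t$ versus $t$'' form of that claim is stated) shows this is bounded near $x_1$ (Claim~\ref{clm:upperbound}). Without this decoupling step your argument does not yield a bound uniform in $\epsilon$.
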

\begin{proof}
    From Remark \ref{rmk:ricci-and-gaussian-curvature}, it suffices to produce $L^p$ bounds on the Gaussian curvature $\kappa$. For a surface of revolution which has generating curve $g$, the Gaussian curvature is given by
    \[\kappa = \frac{-g''(x)}{g(x) (g'(x)^2 + 1)^2}.\]
    The negative part of the curvature, $\kappa^-$, can only be different from zero on the neck region, which corresponds to  $x_1 \leq x \leq x_2$. Notice that, using the area element $dS = 2\pi g(x) \sqrt{1+g'(x)^2}$, we have
    \begin{align}
        \norm{\kappa^-}_{3/2}^{3/2} &\leq \int_{Neck} |\kappa|^{3/2}\ dS \nonumber \\
                               &= 2\pi \int_{x_1}^{x_2} \frac{|g''(x)^{3/2}|}{g(x)^{1/2} (g'(x)^2 + 1)^{5/2}}\ dx \nonumber \\ 
                               &\leq 2\pi \int_{x_1}^{x_2} \left( \frac{|g''(x)^3|}{g(x)} \right)^{1/2}\ dx. \label{intcurvge}
    \end{align}
    The second derivative of $g_\epsilon$ on this region is given by
    \[\begin{split} g_\epsilon''(x) &=
     c_\epsilon''(x) s\left( \frac{x-x_1}{\sqrt{R^2 - \epsilon^2}} \right) + \frac{2 c_\epsilon'(x)}{\sqrt{R^2 - \epsilon^2}}  s'\left( \frac{x-x_1}{\sqrt{R^2 - \epsilon^2}} \right)\\ 
     &\quad + \frac{1}{R^2 - \epsilon^2} (c_\epsilon(x) - \epsilon) s''\left( \frac{x-x_1}{\sqrt{R^2 - \epsilon^2}} \right).    \end{split}\]
    There exist $\epsilon_M>0$ and $x_M>x_1$ such that $g_\epsilon''(x)>0$ and $s''>0$ for $(x,\epsilon)\in (x_1,x_M)\times (0,\epsilon_M)$ (see Appendix Claim~\ref{clm:dblprimepos}). Furthermore, the only negative term is $c_\epsilon''$, so dropping that term we obtain
    \begin{align*}
        0 \leq g_\epsilon''(x) \leq & \ 
        \frac{2}{\sqrt{R^2 - \epsilon^2}} c_\epsilon'(x) s'\left( \frac{x-x_1}{\sqrt{R^2 - \epsilon^2}} \right)+ \frac{1}{R^2 - \epsilon^2} (c_\epsilon(x) - \epsilon) s''\left( \frac{x-x_1}{\sqrt{R^2 - \epsilon^2}} \right) \\
        \leq & \ \frac{2}{\sqrt{R^2 - \epsilon^2}} c_0'(x) s'\left( \frac{x-x_1}{\sqrt{R^2 - \epsilon^2}} \right) + \frac{1}{R^2 - \epsilon^2} c_0(x) s''\left( \frac{x-x_1}{\sqrt{R^2 - \epsilon^2}} \right).
    \end{align*}
    Further restricting $\epsilon \leq \min \{\epsilon_M, R / 2\}$,
    we can bound this expression above by
    \[
        0 \leq g_\epsilon''(x) \leq {\Bigg (} \frac{4}{\sqrt{3} R} c_0'(x) s'\left( \frac{2(x-x_1)}{R} \right)  + \frac{4}{3 R^2} c_0(x) s''\left( \frac{2(x-x_1)}{R} \right) {\Bigg )} =: U(x),
    \]
    and this upper bound $U(x)$ is continuous and bounded on an $\epsilon$-independent interval of the form $[x_1, \xi]$ thanks to the properties of $s$. Furthermore, $g_\epsilon(x) > g_0(x)$ for $x < x_1 + \frac{R}{2}$ (see Appendix Claim~\ref{clm:gepsineq}). This leads to the upper bound
    \[\frac{g_\epsilon''(x)^3}{g_\epsilon(x)} \leq \frac{U(x)^3}{g_0(x)} = \frac{U(x)^3}{c_0(x) s\left( \frac{x-x_1}{R} \right)}\]
    which is continuous and bounded on some $\epsilon$-independent neighborhood of the form $[x_1, \xi]$, with $\xi\leq x_M$ (see Appendix Claim~\ref{clm:upperbound}).
    
    
    Then there exists $M_1(R)>0$, independent of $\epsilon$, such that
    \[\int_{x_1}^\xi \left( \frac{g_\epsilon''(x)^3}{g_\epsilon(x)} \right)^{1/2}\ dx \leq \int_{x_1}^\xi \left( \frac{U(x)^3}{c_0(x) s\left( \frac{x-x_1}{R} \right)} \right)^{1/2}\ dx \leq M_1.\]
    Since $\displaystyle \frac{(g_\epsilon'')^3}{g_\epsilon}$ is continuous in $x$ and $\epsilon$ over the compact set $[\xi, x_1 + R]\times[0, \min \{\epsilon_M, R / 2\}]$ and $x_2 \leq x_1 + R$ for all $\epsilon < R/2$, there exists $M_2(R)>0$ such that 
    \[\int_\xi^{x_2} \left( \frac{|g_{\epsilon}''(x)|^3}{g_\epsilon(x)} \right)^{1/2} dx \leq \int_\xi^{x_1+R} \left( \frac{|g_{\epsilon}''(x)|^3}{g_\epsilon(x)} \right)^{1/2} dx \leq M_2.\]
    Hence, by \eqref{intcurvge}, we have 
    \[\norm{\kappa^-}_{3/2}^{3/2} \leq 2\pi (M_1 + M_2).\]
    This is a finite bound which is independent of $\epsilon$. To obtain then an upper bound for $\bar k(3/2,0)$, we just need a uniform lower bound $a\leq \vol(\Sigma_i)$. Notice that by computing the area of the two half spheres we can obtain one such estimate, $\vol(\Sigma_i) \geq 4\pi R^2$. Thus we get 
    \[\bar k(3/2,0) \leq \left[\frac{1}{2R^2}(M_1+M_2)\right]^{2/3}=:M(R).\]
    Finally, to derive an estimate on $\bar k(3/2,K)$, notice that for any $K>0$ we have, by definition, that $0\leq \rho_K- \rho_0 \leq (n-1)K$. For dimension $n=2$, $\rho_K-\rho_0 \leq K$. Hence, by Minkovski's inequality, we have
    \small\begin{align*}
        \bar k(3/2,K) &= \left( \fint_\Sigma \rho_K^{3/2} dv \right)^{2/3} = \left( \fint_\Sigma [(\rho_K-\rho_0)+\rho_0]^{3/2} dv \right)^{2/3}\\
        &\leq \left( \fint_\Sigma (\rho_K-\rho_0)^{3/2} dv \right)^{2/3} + \left( \fint_\Sigma \rho_0^{3/2} dv \right)^{2/3}\\
        &\leq K + \bar k(3/2,0).
    \end{align*}\normalsize
    Therefore, defining $C(K,R):= K+M$, we finish the proof.

\end{proof}

Notice that, by construction, the profile curve $g$ is monotone in the neck regions. The following elementary lemma ensures then a uniform bound on the area and diameter of $\Sigma_i$. 

\begin{lemma}
\label{diameter-bound}
Suppose that $g:[a,b]\rightarrow \R$ is a positive monotone $\mathcal{C}^1$ function, and let $\Sigma$ denote the surface of revolution generated by $g$. Then 
\begin{align}
{\rm length}(g) &\leq b-a+|g(b)-g(a)|,\\
\vol(\Sigma) &\leq 2\pi \max\{g(a),g(b)\}(b-a)+\pi|g(b)^2-g(a)^2|.
\end{align}
\end{lemma}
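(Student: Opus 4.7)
The plan is to use the standard formulas for arc length and surface of revolution area, combined with the elementary inequality $\sqrt{1+t^2} \leq 1 + |t|$ and the fact that monotonicity lets us drop absolute values on $g'$.

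For the length bound, I would start from
\[\text{length}(g) = \int_a^b \sqrt{1+g'(x)^2}\, dx \leq \int_a^b (1 + |g'(x)|)\, dx = (b-a) + \int_a^b |g'(x)|\, dx.\]
Since $g$ is monotone, $g'$ does not change sign on $[a,b]$, so the fundamental theorem of calculus gives $\int_a^b |g'(x)|\, dx = |g(b)-g(a)|$, which yields the first inequality.

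For the volume bound, the surface of revolution has area element $dS = 2\pi g(x) \sqrt{1+g'(x)^2}\, dx$, so applying the same inequality,
\[\vol(\Sigma) = \int_a^b 2\pi g(x)\sqrt{1+g'(x)^2}\, dx \leq 2\pi \int_a^b g(x)\, dx + 2\pi \int_a^b g(x)|g'(x)|\, dx.\]
For the first term, monotonicity implies $g(x) \leq \max\{g(a),g(b)\}$ on $[a,b]$, so $\int_a^b g(x)\,dx \leq \max\{g(a),g(b)\}(b-a)$. For the second term, since $g'$ has constant sign, $g(x)|g'(x)| = \tfrac{1}{2}\bigl|\tfrac{d}{dx}g(x)^2\bigr|$, and the fundamental theorem of calculus yields $\int_a^b g(x)|g'(x)|\,dx = \tfrac{1}{2}|g(b)^2 - g(a)^2|$. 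Combining these gives the desired bound.

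There is no real obstacle here; the lemma is elementary, and the only subtlety is making sure monotonicity is invoked in the right places (to remove absolute values on $g'$ and to bound $g$ by its endpoint maximum). No additional results from the paper are needed.
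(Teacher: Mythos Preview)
Your proof is correct and follows essentially the same approach as the paper: both use the elementary inequality $\sqrt{1+t^2}\leq 1+|t|$ and then integrate, invoking monotonicity to evaluate $\int|g'|$ and $\int g|g'|$. The only cosmetic difference is that the paper assumes without loss of generality that $g'\geq 0$ and drops the absolute values from the start, whereas you carry the absolute values through and remove them at the end.
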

\begin{proof}
For simplicity, we will assume that $g'(x)\geq 0$ (the proof is analogous when $g'(x)\leq 0$). In that case, using the elementary inequality $(a+b)^2 \geq a^2+b^2$ for $a,b\geq 0$, we have
\begin{align*}
    {\rm length}(g) &= \int_a^b \sqrt{1+g'(x)^2}dx \\
                    &\leq \int_a^b 1+g'(x)dx = (b-a)+g(b)-g(a),
\end{align*}
and similarly
\begin{align*}
    \vol (\Sigma) &= \int_a^b 2\pi g(x)\sqrt{1+g'(x)^2}dx \\
                  &\leq 2\pi\int_a^b  g(x)+ g(x)g'(x)dx \\
                  &\leq 2\pi g(b)(b-a) + \pi (g(b)^2-g(a)^2).
                  \end{align*}
                  \end{proof}
\begin{corollary} \label{diamvolbounds}
In the example constructed above, for $\epsilon =\frac{1}{i}\leq 1$, we have
\begin{align}
    \diam(\Sigma_i)&\leq (2\pi+8)R+2L,\\
    \vol(\Sigma_i)&\leq 10\pi R^2+2\pi L.
\end{align}
\end{corollary}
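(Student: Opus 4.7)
The plan is to decompose the profile curve $g_\epsilon$ into its monotone pieces and apply Lemma~\ref{diameter-bound} to each piece. By construction, $g_\epsilon$ is constant (equal to $\epsilon$) on the cylinder $[-x_1,x_1]$, is monotone on each neck interval (moving between $\epsilon$ and $R$ over an interval of length $\sqrt{R^2-\epsilon^2}\leq R$), and is monotone on each spherical quarter-circle (moving between $R$ and $0$ over an interval of length $R$). Thus the hypotheses of Lemma~\ref{diameter-bound} apply to each piece separately.

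For the diameter bound, I would first sum the length estimates from Lemma~\ref{diameter-bound} to obtain an upper bound on the meridian length: the cylinder contributes at most $L$, each of the two necks contributes at most $\sqrt{R^2-\epsilon^2}+(R-\epsilon)\leq 2R$, and each spherical cap (as a quarter-circle generating curve) contributes at most $R+R=2R$, giving a total meridian length of at most $L+8R$. To convert this meridian length into an intrinsic diameter bound on a surface of revolution, I would connect any two points $p,q\in\Sigma_i$ by a three-segment path consisting of a parallel arc from $p$ to a fixed meridian (length at most the half-circumference at $x_p$, namely $\pi g(x_p)\leq\pi R$), a meridian segment (length $\leq L+8R$), and a parallel arc from the meridian to $q$ (length $\leq\pi R$). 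Summing gives $\diam(\Sigma_i)\leq(L+8R)+2\pi R\leq(2\pi+8)R+2L$.

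For the volume bound, I would apply Lemma~\ref{diameter-bound} piecewise, except that for each hemispherical cap I would use its exact area $2\pi R^2$ rather than the lemma's looser upper bound. The cylinder contributes $2\pi\epsilon L\leq 2\pi L$ since $\epsilon=1/i\leq 1$. Each neck, for which $\max g=R$ and $|g(b)^2-g(a)^2|=R^2-\epsilon^2\leq R^2$, contributes at most $2\pi R\cdot\sqrt{R^2-\epsilon^2}+\pi R^2\leq 3\pi R^2$. Adding the two hemispheres, the two necks, and the cylinder yields $\vol(\Sigma_i)\leq 2\pi L+6\pi R^2+4\pi R^2=2\pi L+10\pi R^2$.

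The statement is essentially routine once Lemma~\ref{diameter-bound} is in hand; no step is truly delicate. The one place I would be careful — and what I expect to be the main pitfall — is the volume constant: a blind application of Lemma~\ref{diameter-bound} to the hemispheres would produce $3\pi R^2$ per cap and a cruder $12\pi R^2$ bound, so to recover the stated constant $10\pi R^2$ one must substitute the exact hemispherical area $2\pi R^2$. The diameter estimate similarly requires remembering to add the two circumferential half-arcs, contributing the extra $2\pi R$, to handle points sitting on different meridians.
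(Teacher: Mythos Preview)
Your proof is correct; the volume computation is essentially identical to the paper's. The diameter argument, however, follows a genuinely different route. The paper bounds $\diam(\Sigma_i)$ by twice the pole-to-pole distance (via the triangle inequality through one pole), and estimates that distance using the exact quarter-circle arc length $\pi R/2$ for each spherical cap, together with Lemma~\ref{diameter-bound} for the necks; this yields $2(\pi R + 4R + L) = (2\pi+8)R + 2L$. You instead connect two arbitrary points by a parallel arc, a meridian segment, and another parallel arc, and you bound the cap contribution via Lemma~\ref{diameter-bound} rather than by exact arc length. Your route gives the slightly sharper $(2\pi+8)R + L$ before you discard the improvement, and it has the advantage of being a direct estimate valid for any surface of revolution with $g\le R$; the paper's doubling trick is cruder but conceptually simpler. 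One minor technicality: Lemma~\ref{diameter-bound} assumes $g\in\mathcal C^1$, which fails at the pole $x=x_2+R$ where $c_\epsilon'\to-\infty$, so strictly speaking you should either invoke the exact arc length $\pi R/2\le 2R$ there or note that the lemma's inequality $\sqrt{1+g'^2}\le 1+|g'|$ integrates correctly regardless.
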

\begin{proof}
The diameter of $\Sigma_i$ is at most twice the distance between the antipodal poles of the spheres. The distance between those can be estimated by half of a great circle of the sphere of radius $R$, plus the length $L$ of the cylinder, plus two times the length of the profile curve in a neck region. Similarly, the area of $\Sigma_i$ is at most the area of a sphere of radius $R$, plus the area of the cylinder of radius at most $1$ and lenght $L$, plus the area of the two neck regions. Using \ref{diameter-bound} to bound the length and area of the neck regions, we obtain the estimates.
\end{proof}

This leads us to our main theorem.

\begin{proof}[Proof of Theorem \ref{mainthm}]
    Fix some $R > 1$ and $L>0$, and consider the sequence of surfaces of revolution $\{(\Sigma_i, g_i)\}_{i\in\mathbb{N}}$ described above, with $\epsilon = \frac{1}{i}$. Notice that Corollary \ref{diamvolbounds} and Theorem \ref{curvature-uniformly-bounded} show that there exist constants $C,D>0$, independent of $\epsilon$, such that $\bar{k}(3/2, K)_{\Sigma_i} \leq C$ and  $\diam(\Sigma_i) \leq D$. Finally, following the discussion in Section \ref{sec:eigenvalue-estimate}, we have that $\lambda_1(\Sigma_i)\rightarrow 0$.
\end{proof}
\section*{Appendix}
\setcounter{section}{1}
\setcounter{equation}{0}
\renewcommand{\thesection}{\Alph{section}}

In this section we include the proofs of some claims that were used in the proofs above. 

\begin{claim}\label{clm:stepfeats}
    The function $s(x)$ defined in \eqref{s(x)} satisfies
    \[s(x) + s(1-x) = 1, \qquad \lim_{x \to 0} s^{(n)}(x) x^{-m} = 0\ \forall m,n \geq 0,\]
\[\lim_{x \to 0} \frac{s'(2x)^a s''(2x)^b}{s(x)} = 0 \text{ for any $a,b \geq 0$ s.t. $a + b > 2$.}\]
\end{claim}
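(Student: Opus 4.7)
Let $\phi(x)=\tfrac{1}{x}-\tfrac{1}{1-x}$, so that on $(0,1)$ one has $s(x)=(1+e^{\phi(x)})^{-1}$. A direct computation gives $\phi(1-x)=-\phi(x)$, hence
\[s(1-x)=\frac{1}{1+e^{-\phi(x)}}=\frac{e^{\phi(x)}}{1+e^{\phi(x)}}=1-s(x),\]
and together with the constant endpoint values this proves the symmetry identity on all of $\R$.

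\textbf{Flatness at the origin.} Since $s\equiv 0$ on $(-\infty,0]$, only the right limit is nontrivial. For $x\in(0,1/2)$ we have $1/(1-x)\leq 2$, whence $\phi(x)\geq 1/x-2$ and therefore
\[0\leq s(x)\leq e^{-\phi(x)}\leq e^{2}\,e^{-1/x},\]
which settles the case $n=0$. For $n\geq 1$ I would induct on $n$ to write $s^{(n)}(x)$, for $x\in(0,1)$, as a finite sum of terms of the form $R(x)\,e^{k\phi(x)}(1+e^{\phi(x)})^{-j}$ with $1\leq k<j$, where each $R$ is a rational function whose only pole on $[0,1/2]$ is at $x=0$. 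The inductive step is straightforward because $\phi'(x)=-1/x^{2}-1/(1-x)^{2}$ is rational of this type, and differentiating $e^{k\phi}/(1+e^{\phi})^{j}$ produces two terms with indices $(k,j+1)$ and $(k+1,j+1)$ that preserve $1\leq k<j$. The uniform bound
\[\frac{e^{k\phi(x)}}{(1+e^{\phi(x)})^{j}}\;\leq\;e^{-(j-k)\phi(x)}\;\leq\;e^{2}\,e^{-1/x}\]
on $(0,1/2)$ then controls each summand by a polynomial in $1/x$ times $e^{-1/x}$, so $x^{-m}|s^{(n)}(x)|\to 0$ as $x\to 0^{+}$ for every $m\geq 0$.

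\textbf{The decay identity.} Here the plan is to extract sharp leading asymptotics of $s,s',s''$ as $y\to 0^{+}$. Using $e^{-\phi(y)}=e^{1/(1-y)}e^{-1/y}=e\cdot e^{-1/y}(1+O(y))$ and $(1+e^{\phi(y)})^{-k}\sim e^{-k\phi(y)}$,
\[s(y)\sim e\,e^{-1/y},\qquad s'(y)=\frac{-e^{\phi(y)}\phi'(y)}{(1+e^{\phi(y)})^{2}}\sim -\phi'(y)\,e^{-\phi(y)}\sim e\,y^{-2}\,e^{-1/y},\]
and a parallel computation of $s''$, in which the dominant contribution comes from $(\phi'(y))^{2}e^{-\phi(y)}\sim e\,y^{-4}e^{-1/y}$ (swamping the $\phi''(y)\sim 2y^{-3}$ term), gives $s''(y)\sim e\,y^{-4}\,e^{-1/y}$. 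Substituting $y=2x$ and dividing by $s(x)$ yields
\[\frac{s'(2x)^{a}\,s''(2x)^{b}}{s(x)}\;\sim\;e^{a+b-1}\,(2x)^{-(2a+4b)}\,e^{-(a+b-2)/(2x)},\]
and whenever $a+b>2$ the exponential factor $e^{-(a+b-2)/(2x)}$ decays faster than any polynomial in $1/x$, forcing the limit to be $0$. The only delicate point throughout is the inductive closed form for $s^{(n)}$ in the second claim; once that structure is in hand, the remaining content reduces to the single observation that $e^{-c/x}$ with $c>0$ annihilates every polynomial in $1/x$ as $x\to 0^{+}$.
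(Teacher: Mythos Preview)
Your proof is correct. The symmetry identity and the third limit are handled in essentially the same way as in the paper: both reduce the third claim to the observation that the dominant exponential factor in the ratio is $\exp\!\big((2-(a+b))/(2x)\big)$, which the paper writes as $\tilde{l}/l^{\,a+b}$ with $\tilde{l}(x)=e^{1/x-1/(1-x)}$ and $l(x)=\tilde{l}(2x)$, and which decays precisely when $a+b>2$. The only genuine methodological difference is in the second claim: the paper simply notes that all derivatives of $s$ vanish at $0$ and applies L'H\^opital's rule $m$ times to $s^{(n)}(x)/x^{m}$, whereas you carry out an explicit structural induction showing that each $s^{(n)}$ is a finite sum of terms $R(x)\,e^{k\phi}/(1+e^{\phi})^{j}$ with $1\leq k<j$ and $R$ rational. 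Your version is more self-contained (it does not presuppose that the one-sided derivatives at $0$ exist and vanish), while the paper's is shorter; both are valid routes to the same conclusion.
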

\begin{proof}
    The first property follows from the symmetry of $s(x)$ about $x=1/2$, and the second property can be derived by observing that all derivatives of $s$ are $0$ at $x = 0$ and applying L'H\^{o}pital's rule $m$ times to $\frac{s^{(n)}(x)}{x^m}$. 
    For the last property, denote $f(x):=\displaystyle \frac{s'(2x)^as''(2x)^b}{s(x)}$. By direct computation, for $x>0$, we have that 
     \[f(x)=\left(\frac{rl}{(1+l)^2}\right)^a\left(\frac{2r^2l^2}{(1+l)^3}-\frac{(\tilde{r}+r^2)l}{(1+l)^2}\right)^b(1+\tilde{l}),\]
     where $\tilde{l}=\tilde{l}(x) := e^{\frac{1}{x}-\frac{1}{1-x}}$, $l=l(x) :=\tilde{l}(2x)$, $r=r(x) := \frac{1}{4x^2}+\frac{1}{(1-2x)^2}$ and $\tilde{r}=\tilde{r}(x) := \frac{1}{4x^3}- \frac{2}{(1-2x)^3}$. Since $l(x),\tilde{l}(x)\to \infty$ as $x\to 0^+$, we have that
     \begin{equation*}
         \lim_{x\to 0^+} f(x) = \lim_{x\to 0^+} r^a\left(r^2-\tilde{r} \right)^b\frac{\tilde{l}}{l^{a+b}}=0,
     \end{equation*}
     where we used that $a+b>2$ and $$ \frac{\tilde{l}(x)}{l(x)^{a+b}} = \exp\left( \frac{2-(a+b) + O(x)}{2x(1-x)(1-2x)} \right).$$
\end{proof}
\begin{claim}\label{clm:dblprimepos}
    There exist $\epsilon_M > 0$ and $x_M > x_1$ such that $g_\epsilon''(x) > 0$ and $s''(\frac{x - x_1}{\sqrt{R^2 - \epsilon^2}}) > 0$ for $(x,\epsilon) \in (x_1, x_M) \times (0, \epsilon_M)$.
\end{claim}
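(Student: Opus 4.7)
I would decompose $g_\epsilon''(x) = T_1 + T_2 + T_3$ as in the displayed formula of Theorem~\ref{curvature-uniformly-bounded}, note that $T_2 \geq 0$ and $T_3 \geq 0$ on the relevant region, and argue quantitatively that the $s''$-summand $T_3$ alone dominates the lone negative term $T_1 = c_\epsilon''(x)\, s(\cdot)$ on an $\epsilon$-independent rectangle. The task is to isolate the right scale that makes this comparison uniform in $\epsilon$.

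First I would analyze $s$ near $0$. Writing $s = (1+e^\phi)^{-1}$ with $\phi(y) = 1/y - 1/(1-y)$, one computes $s' = -\phi'\, s(1-s)$ and
\[s'' = s(1-s)\bigl[(\phi')^2(1-2s) - \phi''\bigr].\]
Since $s(y)\to 0$ as $y\to 0^+$ while $(\phi'(y))^2 \sim y^{-4}$ dominates $|\phi''(y)| \lesssim y^{-3}$, one obtains both $s''(y) > 0$ for $y \in (0, y_0)$ (some $y_0 > 0$) and the stronger asymptotic $y^{2}\, s''(y)/s(y) \to +\infty$ as $y \to 0^+$.

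Next I would change variables to $t := (x-x_1)/w$ with $w := \sqrt{R^2-\epsilon^2}$. Using $c_\epsilon(x)^2 = \epsilon^2 + w^2(2t - t^2)$ together with the elementary bounds $2t - t^2 \geq t$ for $t \in (0,1]$ and $c_\epsilon(x) + \epsilon \leq 2 c_\epsilon(x)$ (valid since $c_\epsilon \geq \epsilon$ on the neck), one obtains $c_\epsilon(x) - \epsilon \geq w^2 t/(2 c_\epsilon(x))$ and $c_\epsilon(x)^2 \geq w^2 t$. Multiplying $g_\epsilon''(x)$ through by $c_\epsilon(x)^3 > 0$ and dropping the non-negative middle summand $T_2$ (non-negative because $s'(t) > 0$ and $c_\epsilon'(x) > 0$ on the neck) yields the key estimate
\[c_\epsilon(x)^3\, g_\epsilon''(x) \;\geq\; \tfrac{w^2}{2}\, t^2\, s''(t) - R^2\, s(t).\]

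To conclude, set $\epsilon_M := R/2$ so that $w \geq R\sqrt{3}/2$ throughout; then, by the divergence established above, pick $t_0 \in (0, y_0)$ small enough that $t^2 s''(t)/s(t) > 8/3$ on $(0, t_0]$. The right-hand side of the key estimate is then strictly positive, giving $g_\epsilon''(x) > 0$, and $x_M := x_1 + t_0 \cdot R\sqrt{3}/2$ ensures $t < t_0 < y_0$ for every $(x,\epsilon) \in (x_1, x_M) \times (0, \epsilon_M)$, so that both assertions of the claim hold simultaneously. The main obstacle is the apparent blow-up of $T_1 = -R^2 s(t)/c_\epsilon(x)^3$ as $\epsilon \to 0^+$, which seems to rule out any uniform $x_M$; the resolution is precisely to multiply through by $c_\epsilon(x)^3$, converting $T_1$ into the $\epsilon$-free quantity $-R^2 s(t)$, while $T_3$ retains the $\epsilon$-uniform lower bound $c_\epsilon(x)^3(c_\epsilon - \epsilon)/w^2 \geq w^2 t^2/2$, after which only the origin-asymptotic $t^2 s''(t)/s(t)\to+\infty$ is required.
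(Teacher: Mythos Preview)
Your proof is correct and takes a genuinely different route from the paper's own argument. Both proofs decompose $g_\epsilon'' = T_1 + T_2 + T_3$ with $T_1 = c_\epsilon'' s$, $T_2 = \tfrac{2c_\epsilon'}{w}s'$, $T_3 = \tfrac{c_\epsilon - \epsilon}{w^2}s''$, and both ultimately restrict to $\epsilon_M = R/2$. The difference is which positive term is used to beat the negative one: the paper drops $T_3$ (using $s''\geq 0$ only to discard it) and shows that $T_2$ alone dominates $T_1$ by dividing through by $s(x^*)$ and reducing the inequality $c_\epsilon'' + \tfrac{2c_\epsilon'}{w}\tfrac{s'}{s} > 0$ to an explicit comparison $A(x^*) > B(x^*)$ of rational functions, checked for all $x^* < 1/2$; this yields the concrete value $x_M = x_1 + R/4$. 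You instead drop $T_2$ and show that $T_3$ dominates $T_1$: your device of multiplying by $c_\epsilon(x)^3$ neutralizes the $\epsilon$-dependence in $T_1 = -R^2 s(t)/c_\epsilon^3$, and the algebraic bounds $c_\epsilon^2 \geq w^2 t$ and $c_\epsilon - \epsilon \geq w^2 t/(2c_\epsilon)$ reduce everything to the single asymptotic $t^2 s''(t)/s(t) \to +\infty$, which you correctly derived from $s'' \sim s\,(\phi')^2 \sim s/y^4$. Your approach is softer and avoids the polynomial case analysis, at the price of a non-explicit $x_M$; the paper's approach is more hands-on and produces explicit constants.
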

\begin{proof}
    Let $x^* = \frac{x-x_1}{\sqrt{R^2 - \epsilon^2}}$. Since $s(x^*) > 0$ for $x^* > 0$, $g_\epsilon''(x)$ has the same sign as $\frac{g_\epsilon''(x)}{s(x^*)}$. Expanding the definition of $g_\epsilon''(x)$, one can see that this is bounded below by
    \[c_\epsilon''(x) + \frac{2 c_\epsilon'(x)}{\sqrt{R^2 - \epsilon^2}} \frac{s'(x^*)}{s(x^*)}.\]
    If we can show that 
    \[c_\epsilon''(x) + \frac{2 c_\epsilon'(x)}{\sqrt{R^2 - \epsilon^2}} \frac{s'(x^*)}{s(x^*)} > 0\]
    on some product set $(x, \epsilon) \in (x_1, x_M) \times (0, \epsilon_M)$, then we are done. One can manipulate the preceding inequality, eliminating $x$ in favor of $x^*$, to obtain
    \begin{equation} \label{eqn:claim2-main-inequality}
        2\frac{s'(x^*)}{s(x^*)} > \frac{1}{1-x^*} \frac{1}{1 - \frac{R^2 - \epsilon^2}{R^2} (1-x^*)^2}.
    \end{equation}
    The left hand side of  \eqref{eqn:claim2-main-inequality} is 
    \[2\frac{s'(x^*)}{s(x^*)} = 2 \frac{e^{1/x^*}}{e^{1/(1-x^*)} + e^{1/x^*}} \frac{1 - 2x^* + 2{x^*}^2}{{x^*}^2 (x^* - 1)^2}.\]
    For $x^* < 1/2$, the first factor is at least $1/2$, so 
    \[2\frac{s'(x^*)}{s(x^*)} \geq \frac{1 - 2x^* + 2{x^*}^2}{{x^*}^2 (x^* - 1)^2} =: A(x^*).\]
    On the other hand, the right-hand side of \eqref{eqn:claim2-main-inequality} satisfies
    \[\frac{1}{1-x^*} \frac{1}{1 - \frac{R^2 - \epsilon^2}{R^2} (1-x^*)^2} \leq \frac{1}{(1-x^*)(2x^*-{x^*}^2)}=:B(x^*)\]
    We see that that $A(1/2) > B(1/2)$. Furthermore, by elementary means, working with the corresponding polynomial, we can see that the equation $A(x^*)=B(x^*)$
    has no solutions for $x^* \in (0, 1/2)$. This shows $A(x^*) > B(x^*)$, which implies \eqref{eqn:claim2-main-inequality}, as long as $x^* < 1/2$. 
    Restrict $\epsilon < R/2 =: \epsilon_M$. Then
    \[x^* = \frac{x-x_1}{\sqrt{R^2 - \epsilon^2}} < \frac{x-x_1}{\sqrt{R^2 - \left( \frac{R}{2} \right)^2}} < \frac{x-x_1}{R/2}.\]
    If we impose $\frac{x - x_1}{R/2} < \frac{1}{2}$, then the condition we required on $x^*$ is satisfied and we obtain the sufficient bound $x_1 < x < x_1 + \frac{R}{4} =: x_M$ which is precisely what we needed. The claim about $s''$ follows as long as $x^* < 1/2$, so it is satisfied by our choice of $x_M$. 
\end{proof}

\begin{claim}\label{clm:gepsineq}
    For $x \in (x_1,x_2)$, $g_\epsilon(x) \geq g_0(x)$.
\end{claim}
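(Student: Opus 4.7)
Introduce the shorthand $t := x - x_1$ and $\sigma_\epsilon := \sqrt{R^2 - \epsilon^2}$, so that $x_2 = x_1 + \sigma_\epsilon$ and the definition of $g_\epsilon$ on the neck reads
\[g_\epsilon(x) \;=\; \epsilon\bigl(1 - s(t/\sigma_\epsilon)\bigr) \;+\; c_\epsilon(x)\, s(t/\sigma_\epsilon).\]
Because $\epsilon \geq 0$ and $s \in [0,1]$, the first summand is non-negative, giving the easy pointwise bound $g_\epsilon(x) \geq c_\epsilon(x)\, s(t/\sigma_\epsilon)$. Since $g_0(x) = c_0(x)\, s(t/R)$, the claim will follow once we verify the two factor-wise inequalities
\[c_\epsilon(x) \;\geq\; c_0(x) \qquad\text{and}\qquad s(t/\sigma_\epsilon) \;\geq\; s(t/R) \qquad (t \in (0,\sigma_\epsilon)),\]
both sides being non-negative, so that they may be multiplied.

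The first inequality is purely algebraic. Expanding the defining semicircles yields
\[c_\epsilon(x)^2 - c_0(x)^2 \;=\; (t - R)^2 - (t - \sigma_\epsilon)^2 \;=\; (2t - R - \sigma_\epsilon)(\sigma_\epsilon - R).\]
For $\epsilon > 0$ we have $\sigma_\epsilon < R$, and for $t \in (0,\sigma_\epsilon)$ one also has $2t - R - \sigma_\epsilon < \sigma_\epsilon - R < 0$; both factors are negative, so the product is strictly positive. Taking positive square roots gives $c_\epsilon(x) > c_0(x)$ on $(x_1, x_2)$.

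For the second inequality, $\sigma_\epsilon \leq R$ forces $t/\sigma_\epsilon \geq t/R$, so it is enough to know that $s$ is non-decreasing on $\mathbb{R}$. Outside $[0,1]$ the function is constant. On $(0, 1/2)$ the exponent $1/x - 1/(1-x)$ is strictly decreasing (both terms contribute a decrease), hence $s$ is strictly increasing there; the symmetry $s(x) + s(1-x) = 1$ from Claim~\ref{clm:stepfeats} transports monotonicity to $(1/2, 1)$. Multiplying the two factor-wise inequalities yields $c_\epsilon(x)\, s(t/\sigma_\epsilon) \geq c_0(x)\, s(t/R) = g_0(x)$, and combining with the opening step gives the claim. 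No step poses a genuine obstacle; the only item requiring a moment of care is recording that both inequalities are between non-negative quantities so that they may be multiplied.
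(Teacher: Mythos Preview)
Your proof is correct. Both you and the paper use the monotonicity of $s$ (so that $s(t/\sigma_\epsilon)\geq s(t/R)$) together with the fact that $c_\epsilon$ increases with $\epsilon$, but the packaging differs. The paper first freezes the step-function argument, defining $h_\epsilon(x):=\epsilon+(c_\epsilon(x)-\epsilon)\,s\!\left(\tfrac{x-x_1}{R}\right)$, then shows $h_\epsilon$ is nondecreasing in $\epsilon$ by computing $\partial h_\epsilon/\partial\epsilon=1+(\partial c_\epsilon/\partial\epsilon-1)\,s\geq 0$. You instead drop the nonnegative term $\epsilon(1-s(t/\sigma_\epsilon))$ and compare the two remaining factors directly, verifying $c_\epsilon(x)\geq c_0(x)$ by the explicit identity $c_\epsilon^2-c_0^2=(2t-R-\sigma_\epsilon)(\sigma_\epsilon-R)$. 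Your route is a bit more self-contained since it avoids the $\epsilon$-derivative and gives the comparison $c_\epsilon\geq c_0$ algebraically; the paper's route, by contrast, never needs the explicit factorization and would generalize more readily if $c_\epsilon$ were replaced by any family with $\partial c_\epsilon/\partial\epsilon\geq 0$.
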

\begin{proof}
    Notice that $s\left( \frac{x-x_1}{\sqrt{R^2 - \epsilon^2}} \right)\geq s\left( \frac{x-x_1}{R} \right)$, so 
    \[g_\epsilon (x) \geq \epsilon + (c_\epsilon(x) - \epsilon) s\left( \frac{x-x_1}{R} \right)=:h_\epsilon(x).\]
    The $\epsilon$-derivative of this lower bound is
    \[\frac{\partial h_\epsilon}{\partial\epsilon}=1 + \left( \frac{\partial c_\epsilon}{\partial\epsilon} - 1 \right) s\left( \frac{x-x_1}{R} \right) \geq 0,\]
    since $\frac{\partial c_\epsilon}{\partial \epsilon} \geq 0$ by direct computation, and $s\leq 1$.
    Thus $h_\epsilon (x) \geq h_0(x)$. Since $h_0(x) = g_0(x)$, then $g_\epsilon(x) \geq g_0(x)$.
\end{proof}

\begin{claim}\label{clm:upperbound}
    The $\epsilon$-independent quantity
    \[\frac{U(x)^3}{c_0(x) s(\frac{x - x_1}{R})}\]
    is continuous and bounded on an $\epsilon$-independent neighborhood of the form $[x_1, \xi]$ where $\xi \leq x_M$.
\end{claim}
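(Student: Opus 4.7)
The plan is to choose any $\xi \in (x_1, x_M]$ and observe that on the open interval $(x_1, \xi]$ the ratio is manifestly continuous, because $U$, $c_0$, and $s((x-x_1)/R)$ are smooth there with $c_0 > 0$ and $s((x-x_1)/R) > 0$. Thus the only delicate point is the left endpoint $x = x_1$, where $c_0(x_1) = 0$ and $s(0) = 0$, so the ratio takes an indeterminate form. It will suffice to show that the limit as $x \to x_1^+$ exists and equals $0$; the function then extends continuously to the compact interval $[x_1, \xi]$ and is therefore bounded.

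Set $t := x - x_1$. Expanding $U(x)^3$ by the binomial theorem produces four terms, each of the form $C_k(R)\,c_0'(x)^{3-k}\,c_0(x)^{k}\,s'(2t/R)^{3-k}\,s''(2t/R)^{k}$ for $k \in \{0,1,2,3\}$. Dividing by $c_0(x)\,s(t/R)$, each term splits as a product of a $c_0$-piece $c_0'(x)^{3-k}c_0(x)^{k-1}$ and an $s$-piece $s'(2t/R)^{3-k}s''(2t/R)^{k}/s(t/R)$. From the explicit formula $c_0(x) = \sqrt{2Rt - t^2}$ one reads off the asymptotics $c_0(x) \sim \sqrt{2R}\,t^{1/2}$ and $c_0'(x) \sim \sqrt{R/(2t)}$ as $t \to 0^+$, so the $c_0$-piece blows up at worst like $t^{-2}$ (attained when $k = 0$) and is bounded when $k \geq 1$.

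The main obstacle is to show that this algebraic blowup is defeated by the corresponding $s$-piece for each $k$. The last property in Claim~\ref{clm:stepfeats} treats precisely the ratio $s'(2y)^a s''(2y)^b/s(y)$ for $a + b > 2$; here we use it with $y = t/R$ and $a + b = 3$. While that claim merely asserts convergence to $0$, the explicit computation in its proof identifies the factor $\tilde l / l^{a+b} = \exp\!\bigl((2-(a+b)+O(t/R))/(2(t/R)(1-t/R)(1-2t/R))\bigr) = \exp(-R/(2t) + O(1))$, which tends to $0$ faster than any power of $t$. Consequently the $s$-piece overwhelms the $t^{-2}$ blowup of the $c_0$-piece, and each of the four binomial contributions tends to $0$ as $t \to 0^+$. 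Hence $U(x)^3 / (c_0(x)\, s((x-x_1)/R))$ extends continuously by $0$ at $x = x_1$ and is bounded on $[x_1, \xi]$, which proves the claim.
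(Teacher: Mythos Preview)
Your argument is correct and follows the same overall strategy as the paper: reduce to the behaviour at $x=x_1$, expand $U^3$ by the binomial theorem, and split each summand into a $c_0$-factor and an $s$-factor. The only substantive difference is in where you draw the split. The paper peels off an extra half power of $s'$ or $s''$ and attaches it to the $c_0$-factor, so that factor already tends to $0$ by the second property of Claim~\ref{clm:stepfeats} (derivatives of $s$ annihilate finite-order poles), and the remaining $s$-ratio has exponents summing to $5/2>2$, so the third property of Claim~\ref{clm:stepfeats} applies as a black box. You instead keep the full exponents $a+b=3$ in the $s$-ratio and let the $c_0$-factor blow up polynomially; this forces you to look inside the proof of Claim~\ref{clm:stepfeats} to extract the exponential decay rate $\exp(-R/(2t)+O(1))$ rather than just use its statement. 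Both routes work; the paper's is more modular, yours is more quantitative.

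One small slip: the $c_0$-piece $c_0'(x)^{3-k}c_0(x)^{k-1}$ is not bounded for $k=1$, since $c_0'(x)^2\sim R/(2t)$. It is bounded only for $k\geq 2$. This does not affect your conclusion, because your ``at worst $t^{-2}$'' bound is correct and the exponential factor kills any power of $t^{-1}$, but the sentence should be corrected. You might also note, for completeness, that the polynomial factor $r^a(r^2-\tilde r)^b$ coming from the proof of Claim~\ref{clm:stepfeats} contributes additional negative powers of $t$, which are likewise absorbed by the exponential.
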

\begin{proof}
    The stated quantity is continuous away from $x_1$, so it suffices to show that the quantity converges to $0$ as $x \to x_1^+$. The function $U(x)$ has the form $p+q$, where 
    \begin{align*}
        p &= \frac{4}{\sqrt{3} R} c_0'(x) s'\left( \frac{2(x-x_1)}{R} \right), \\
        q &= \frac{4}{3 R^2} c_0(x) s''\left( \frac{2(x-x_1)}{R} \right).
    \end{align*}
    Applying the binomial theorem, 
    \[\frac{U(x)^3}{c_0(x) s(\frac{x - x_1}{R})} = \sum_{k=0}^3 \frac{\binom{3}{k} p^k q^{3-k}}{c_0(x) s(\frac{x - x_1}{R})}.\]
    We claim that each term in the sum tends to $0$ as $x \to x_1^+$. Each term in the sum contains a ratio of $c_0$ and its derivative, and near $x_1$ this ratio is either finite or a pole of finite order. Note that the sum of powers of $s'$ and $s''$ in the numerator of each term is $3$. Thanks to this, one may factor each term into two parts in the following way. One factor comprises the ratio of $c_0$ and its derivatives, times either $(s')^{1/2}$ or $(s'')^{1/2}$; this factor goes to $0$ as $x \to x_1^+$ because derivatives of $s$ annihilate finite-order poles. The remaining factor is a ratio  whose numerator is $s'(\frac{2(x - x_1)}{R})^a s''(\frac{2(x - x_1)}{R})^b$, where $a+b = \frac{5}{2}$, and whose denominator is $s(\frac{x - x_1}{R})$; this factor also goes to $0$ as $x \to x_1^+$ because of the stated properties of $s$ (see also Appendix Claim~\ref{clm:stepfeats}). Overall this shows that 
    \[\lim_{x \to x_1^+} \frac{U(x)^3}{c_0(x) s(\frac{x - x_1}{R})} = 0\]
    and this completes the proof.
\end{proof}


\begin{thebibliography}{00}
\bibitem{AndrewsClutterbuck}
B.~Andrews, J.~Clutterbuck, Sharp modulus of continuity for parabolic equations
  on manifolds and lower bounds for the first eigenvalue, Anal. PDE 6~(5)
  (2013) 1013--1024.
\newblock \href {https://doi.org/10.2140/apde.2013.6.1013}
  {\path{doi:10.2140/apde.2013.6.1013}}.

\bibitem{Aubry}
E.~Aubry, Finiteness of {$\pi_1$} and geometric inequalities in almost positive
  {R}icci curvature, Ann. Sci. \'{E}cole Norm. Sup. (4) 40~(4) (2007) 675--695.
\newblock \href {https://doi.org/10.1016/j.ansens.2007.07.001}
  {\path{doi:10.1016/j.ansens.2007.07.001}}.


\bibitem{BuragoBuragoIvanov}
D.~Burago, Y.~Burago, S.~Ivanov, A course in metric geometry, Graduate Studies in Mathematics, American Mathematical Society (2001), ISBN 0-8218-2129-6, 
\newblock \href {https://doi.org/10.1090/gsm/033}
  {\path{10.1090/gsm/033}}.


\bibitem{Cheeger}
J.~Cheeger, A lower bound for the smallest eigenvalue of the {L}aplacian, in:
  Problems in analysis ({P}apers dedicated to {S}alomon {B}ochner, 1969), 1970,
  pp. 195--199.
  
  \bibitem{CheegerColding}
  J.~Cheeger, T.~Colding, On the structure of spaces with {R}icci curvature bounded below. {III}, J. Differential Geom., no. 1, vol.54, pp. 37--74 (2000),  \newblock \href{http://projecteuclid.org/euclid.jdg/1214342146}{http://projecteuclid.org/euclid.jdg/1214342146}
  
  \bibitem{ChenWang}
  M.~Chen, F.~Wang, General formula for lower bound of the first eigenvalue on Riemannian manifolds. Sci. China Ser. A 40(4), 384-–394 (1997)
  \newblock \href{https://doi.org/10.1007/BF02911438}{\path{doi:10.1007/BF02911438}}

\bibitem{DaiWeiZhang}
X.~Dai, G.~Wei, Z.~Zhang, Local {S}obolev constant estimate for integral
  {R}icci curvature bounds, Adv. Math. 325 (2018) 1--33.
\newblock \href {https://doi.org/10.1016/j.aim.2017.11.024}
  {\path{doi:10.1016/j.aim.2017.11.024}}.

\bibitem{Gallot}
S.~Gallot, Isoperimetric inequalities based on integral norms of {R}icci
  curvature, no. 157-158, 1988, pp. 191--216, colloque Paul L\'{e}vy sur les
  Processus Stochastiques (Palaiseau, 1987).

\bibitem{HangWang}
F.~Hang, X.~Wang, A remark on {Z}hong-{Y}ang's eigenvalue estimate, Int. Math. Res. Not. IMRN, no. 18 (2017). 
\newblock \href {https://doi.org/10.1093/imrn/rnm064}
  {\path{doi:10.1093/imrn/rnm064}}.

\bibitem{Kroger}
P.~Kr\"oger, On the spectral gap for compact manifolds, J. Differ. Geom. 36(2), 315–-330 (1992)
\newblock \href{http://projecteuclid.org/euclid.jdg/1214448744}{\path{http://projecteuclid.org/euclid.jdg/1214448744}}

\bibitem{Leal}
H.~Leal, Spectral gaps on {R}iemannian {M}anifolds, ProQuest LLC, Ann Arbor,
  MI, 2020, thesis (Ph.D.)--University of California, Irvine.

\bibitem{LiYau}
P.~Li, S.~T. Yau, Estimates of eigenvalues of a compact {R}iemannian manifold,
  in: Geometry of the {L}aplace operator ({P}roc. {S}ympos. {P}ure {M}ath.,
  {U}niv. {H}awaii, {H}onolulu, {H}awaii, 1979), Proc. Sympos. Pure Math.,
  XXXVI, Amer. Math. Soc., Providence, R.I., 1980, pp. 205--239.

\bibitem{Lichnerowicz}
A.~Lichnerowicz, G\'{e}om\'{e}trie des groupes de transformations, Travaux et
  Recherches Math\'{e}matiques, III. Dunod, Paris, 1958.

\bibitem{LingLu}
J.~Ling, Z.~Lu, Bounds of eigenvalues on {R}iemannian manifolds, in: Trends in
  partial differential equations, Vol.~10 of Adv. Lect. Math. (ALM), Int.
  Press, Somerville, MA, 2010, pp. 241--264.

\bibitem{Petersen-Sprouse1998}
P.~Petersen, C.~Sprouse, Integral curvature bounds, distance estimates and
  applications, J. Differential Geom. 50~(2) (1998) 269--298.

\bibitem{PetersenWei}
P.~Petersen, G.~Wei, Relative volume comparison with integral curvature bounds,
  Geom. Funct. Anal. 7~(6) (1997) 1031--1045.
\newblock \href {https://doi.org/10.1007/s000390050036}
  {\path{doi:10.1007/s000390050036}}.
  
\bibitem{Obata}
M.~Obata, Certain conditions for a {R}iemannian manifold to be isometric with a sphere, J. Math. Soc. Japan, vol. 14 (1962), pp. 333--340. 
\newblock \href {https://doi.org/10.2969/jmsj/01430333}
  {\path{doi:10.2969/jmsj/01430333}}.

\bibitem{Ramosthesis}
X.~Ramos~Oliv\'{e},
Gradient {E}stimates {U}nder {I}ntegral {C}urvature {C}onditions, Thesis (Ph.D.)--University of California, Riverside, ProQuest LLC, Ann Arbor, MI, (2019).
\newblock \href{http://gateway.proquest.com/openurl?url_ver=Z39.88-2004&rft_val_fmt=info:ofi/fmt:kev:mtx:dissertation&res_dat=xri:pqm&rft_dat=xri:pqdiss:13895585}{Access here}.

\bibitem{RSWZ}
X.~Ramos~Oliv\'{e}, S.~Seto, G.~Wei, Q.~S. Zhang, Zhong-{Y}ang type eigenvalue
  estimate with integral curvature condition, Math. Z. 296~(1-2) (2020)
  595--613.
\newblock \href {https://doi.org/10.1007/s00209-019-02448-w}
  {\path{doi:10.1007/s00209-019-02448-w}}.

\bibitem{RoseWei}
C.~Rose, G.~Wei, Eigenvalue estimates for {Kato}-type {Ricci} curvature
  conditions (2020).
\newblock \href {http://arxiv.org/abs/2003.07075} {\path{arXiv:2003.07075}}.

\bibitem{ShiZhang}
Y.M.~Shi, H.C.~Zhang, Lower bounds for the first eigenvalue on compact manifolds, Chin. Ann. Math. Ser. A 28(6), 863-–866 (2007). (Chinese, with English and Chinese summaries)

\bibitem{SormaniWenger}
C.~Sormani, S.~Wenger, The intrinsic flat distance between {R}iemannian manifolds and other integral current spaces, J. Differential Geom., vol. 87 (2011), no. 1, pp. 117--199
\newblock \href {http://projecteuclid.org/euclid.jdg/1303219774} {http://projecteuclid.org/euclid.jdg/1303219774}.

\bibitem{Wu}
J.-Y. Wu, Comparison geometry for integral {B}akry-\'{E}mery {R}icci tensor
  bounds, J. Geom. Anal. 29~(1) (2019) 828--867.
\newblock \href {https://doi.org/10.1007/s12220-018-0020-8}
  {\path{doi:10.1007/s12220-018-0020-8}}.

\bibitem{Yang}
H.~C. Yang, Estimates of the first eigenvalue for a compact {R}iemann manifold,
  Sci. China Ser. A 33~(1) (1990) 39--51.

\bibitem{ZhangWang}
Y.~Zhang, K.~Wang, An alternative proof of lower bounds for the first
  eigenvalue on manifolds, Math. Nachr. 290~(16) (2017) 2708--2713.
\newblock \href {https://doi.org/10.1002/mana.201600388}
  {\path{doi:10.1002/mana.201600388}}.

\bibitem{ZhongYang}
J.~Q. Zhong, H.~C. Yang, On the estimate of the first eigenvalue of a compact
  {R}iemannian manifold, Sci. Sinica Ser. A 27~(12) (1984) 1265--1273.

\end{thebibliography}

\newpage 

{\footnotesize
}
 
{\footnotesize  
\medskip
\medskip
\vspace*{1mm} 
 
\noindent {\it Connor C. Anderson}\\  
Department of Mathematical Sciences, Worcester Polytechnic Institute\\
100 Institute Road\\
Worcester, MA 01609\\
E-mail: {\tt ccanderson@wpi.edu}\\ \\  

\noindent {\it Xavier Ramos Oliv\'e, PhD}\\  
Department of Mathematical Sciences, Worcester Polytechnic Institute\\
100 Institute Road\\
Worcester, MA 01609\\
E-mail: {\tt xramosolive@wpi.edu}\\ \\

\noindent {\it Kamryn Spinelli}\\  
Department of Mathematical Sciences, Worcester Polytechnic Institute\\
100 Institute Road\\
Worcester, MA 01609\\
E-mail: {\tt kpspinelli@wpi.edu} \\ \\   

}


\end{document}